\newtheorem{theorem}{Theorem}
\newtheorem*{theorem*}{Theorem}
\newtheorem{lemma}[theorem]{Lemma}
\newtheorem{corollary}[theorem]{Corollary}
\theoremstyle{definition}
\newtheorem{definition}[theorem]{Definition}
\newcommand{\setN}{\mathbb{N}}
\newcommand{\setZ}{\mathbb{Z}}
\newcommand{\setQ}{\mathbb{Q}}
\newcommand{\setR}{\mathbb{R}}
\DeclarePairedDelimiterX{\diam}[1]{\lvert}{\rvert}{#1}
\DeclarePairedDelimiterX{\abs}[1]{\lvert}{\rvert}{#1}
\DeclarePairedDelimiterX{\cls}[1]{\lvert}{\rvert}{#1} %complete linear system
\DeclarePairedDelimiterX{\norm}[1]{\lVert}{\rVert_\infty}{#1}
\DeclarePairedDelimiterX{\distz}[1]{\lVert}{\rVert}{#1}
\DeclarePairedDelimiterX{\normp}[1]{\lvert}{\rvert_p}{#1}
\DeclarePairedDelimiterX{\twonorm}[1]{\lVert}{\rVert_2}{#1}
\DeclarePairedDelimiterX\set[2]%
\DeclarePairedDelimiterX\seq[1]{\lbrace}{\rbrace}{#1}
\newcommand{\simapp}{\mathcal{S}} %simultaneous approximation
\newcommand{\myapp}{V}              %projected approximation
\newcommand{\intsimapp}{\mathcal{I}}
\newcommand{\Hm}{\mathcal{H}}       %hausdorff measure
\newcommand{\proj}{\mathbb{P}}
\newcommand{\Hp}{H_\textup{proj}} %projective height
\newcommand{\leb}{\lambda}      %lebesgue measure
\newcommand{\ac}[1]{\overline{#1}}
\renewcommand{\vec}[1]{\mathbf{#1}}
\newcommand{\mysubstack}
{
  \substack{%
    \vec{p} \in \setZ^n
    \\
    0 \leq p_1,\dots,p_n \leq q
    \\
    \gcd(p_1, \dots, p_n, q) = 1
  }
}
\newcommand{\mylimsup}
{
  \bigcap_{N=1}^\infty
  \bigcup_{q > N}
  \quad
  \smashoperator{\bigcup_{\mysubstack}}
  \quad
}
\title[Intrinsic Diophantine Approximation]
{Intrinsic Diophantine Approximation\\on General Polynomial Surfaces}
\author{Morten Hein Tiljeset}
\thanks{The research was partially supported by the Danish Research Council for Independent
 Research.}
\address{M. H. Tiljeset, Department of Mathematical Sciences, Faculty
  of Science, University of Aarhus, Ny Munkegade 118,
  DK-8000 Aarhus C, Denmark}
\email{mortil@math.au.dk}
\date{\today}
\subjclass[2000]{11J83 (primary), 11J54}         %11J83 = Metric Diophantine Approximation
\begin{document}

\begin{abstract}
  We study the Hausdorff measure and dimension of the set of
  intrinsically simultaneously $\psi$-approximable points on a curve,
  surface, etc., given as a graph of integer valued polynomials. We
  obtain complete answers to these questions for algebraically
  ``nice'' manifolds. This generalizes earlier work done in the case
  of curves.
\end{abstract}

\maketitle

\section{Introduction}
\label{sec:introduction}

Let $\vec{x} \in \setR^n$ be a real vector. Let $\setR^+$ denote the
positive real numbers. Here and throughout, let
$\psi: \setN \to \setR^+$ be a decreasing function, which we will
refer to as an \emph{approximation function}. We say that $\vec{x}$ is
\emph{(simultaneously) $\psi$-approximable} if there exist infinitely
many rational points $\vec{p}/q$ with $\vec{p} \in \setZ^n$ and
$q \in \setN$ such that
\begin{equation*}
  \norm{\vec{x} - \vec{p}/q} \leq \psi(q).
\end{equation*}
We denote the set of $\psi$-approximable vectors by $\simapp_\psi$,
and for the particular approximation functions
$\psi_\tau(r) = r^{-\tau}$ we use the notation
$\simapp_\tau = \simapp_{\psi_\tau}$.

In this notation, the classical theorem of Dirichlet states that
$\simapp_{1+1/n} = \setR^n$. We say that $\vec{x} \in \setR^n$ is
\emph{very well approximable (VWA)} if $\vec{x} \in \simapp_\tau$ for
some $\tau > 1 + 1/n$. Otherwise, we say that $\vec{x}$ is not very
well approximable or \emph{extremal}.

The theory of metric Diophantine approximation seeks to quantify, in terms of
measure, the size of $\simapp_\psi$. The starting point is the
following theorem due to Khintchine.
\begin{theorem}[Khintchine]
  \label{theorem:khintchine}
  Let $\psi: \setN \to \setR^+$ be a decreasing approximation
  function. Let $\leb_n$ denote the $n$-dimensional Lebesgue measure. Then,
  \begin{equation*}
    \leb_n(\simapp_\psi) =
    \begin{cases}
      0 &\text{ if } \sum_{r=1}^\infty r^n \psi(r)^n < \infty
      \vspace*{0.2em}
      \\
      \infty &\text { if } \sum_{r=1}^\infty r^n \psi(r)^n = \infty.
    \end{cases}
\end{equation*}
\end{theorem}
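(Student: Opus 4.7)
The plan is to realise $\simapp_\psi$ as a $\limsup$ set of cubical neighbourhoods of rational points and to apply the two halves of the Borel--Cantelli lemma. Since $\simapp_\psi$ is invariant under translation by $\setZ^n$, it suffices to prove the result on the unit cube $[0,1]^n$ and then extend by periodicity. Concretely, for each $q \in \setN$ set
$$E_q = \bigcup_{\substack{\vec{p} \in \setZ^n \\ 0 \le p_1,\dots,p_n \le q}} \set{\vec{x} \in [0,1]^n}{\norm{\vec{x} - \vec{p}/q} \le \psi(q)},$$
so that $\simapp_\psi \cap [0,1]^n = \bigcap_{N \ge 1} \bigcup_{q > N} E_q$.

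For the convergence half, the union defining $E_q$ has at most $(q+1)^n \le 2^n q^n$ terms, each contributing Lebesgue measure at most $(2\psi(q))^n$. Thus $\leb_n(E_q) \le 4^n q^n \psi(q)^n$, and the first Borel--Cantelli lemma together with the assumption $\sum_q q^n \psi(q)^n < \infty$ forces $\leb_n(\simapp_\psi \cap [0,1]^n) = 0$. Periodicity then yields $\leb_n(\simapp_\psi) = 0$.

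The divergence half is the serious part. The naive second Borel--Cantelli does not apply because the events $E_q$ are highly correlated, so I would instead establish \emph{quasi-independence on average}, i.e., a bound of the form
$$\sum_{q,q' \le N} \leb_n(E_q \cap E_{q'}) \le C\Bigl(\sum_{q \le N} \leb_n(E_q)\Bigr)^2,$$
valid for all large $N$, together with the matching lower bound $\leb_n(E_q) \ge c\, q^n \psi(q)^n$, which holds once $\psi(q) \le 1/(2q)$ (in that regime the cubes around distinct $\vec{p}/q$ are pairwise disjoint in $[0,1]^n$). The essential input is the spacing estimate $\norm{\vec{p}/q - \vec{p}'/q'} \ge 1/(qq')$ for distinct fractions, which controls the size of $E_q \cap E_{q'}$. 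A Paley--Zygmund style divergent Borel--Cantelli then forces $\leb_n(\simapp_\psi \cap [0,1]^n) > 0$, and a Cassels-type zero-one law, exploiting invariance of the event under translation by rationals, upgrades this to full measure. Periodicity finally produces infinite Lebesgue measure on $\setR^n$.

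The main obstacle is the correlation estimate delivering quasi-independence with an absolute implied constant. In dimension $n=1$ one can bypass this machinery using continued fractions, but the multidimensional statement (originally due to Groshev) truly requires the overlap bookkeeping together with the zero-one law. By contrast, the convergence direction is a routine volume count.
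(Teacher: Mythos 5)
Your proposal matches the paper's own sketch: write $\simapp_\psi$ as a limsup set, dispatch convergence via the first Borel--Cantelli lemma after localizing to a bounded cube, and handle divergence by showing the approximating sets ``do not overlap too much'' so that the conclusion of the second Borel--Cantelli lemma can be recovered. The paper only outlines this structure as classical background and does not supply a full proof, and the standard details you add --- the lower bound $\leb_n(E_q)\gg q^n\psi(q)^n$ from disjointness of the cubes once $\psi(q)\le 1/(2q)$, the quasi-independence-on-average estimate, a Paley--Zygmund/Chung--Erd\H{o}s step to get positive measure, and a Cassels-type zero-one law to upgrade to full measure, followed by periodicity --- are all correct and consistent with that sketch.
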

In particular, this theorem shows that almost all points are
extremal. Furthermore, it gives metric answers not only for functions
of the form $\psi_\tau(r) = r^{-\tau}$ but for general approximation
functions. In what follows, it will be useful to understand where this
theorem comes from. The starting point is to realize that we can write
the set of $\psi$-approximable vectors as a limsup set:
\begin{equation*}
    \simapp_\psi = \bigcap_{N=1}^\infty \bigcup_{q > N} \bigcup_{\vec{p} \in \setZ^n}
  B\left(\frac{\vec{p}}{q}, \psi(q)\right),
\end{equation*}
where
$B(\vec{x},r) = \set
{\vec{y} \in \setR^n}
{\norm{\vec{x} - \vec{y}} \leq r}$
is the ball in the sup-norm.

The convergence part of the theorem now follows by restricting to a
countable cover given by sets of the form $\simapp_\psi \cap I^n$
where $I \subset \setR$ is a bounded interval, and applying the first
Borel-Cantelli lemma. The hard part of the theorem is then to prove
the divergence case by establishing that the sets do not overlap too
much, so that the conclusion of the second Borel-Cantelli lemma may be
recovered.

A more refined viewpoint is given by replacing the Lebesgue measure
with the Hausdorff measure and studying in more detail the size of the
null-sets. Before proceeding, we recall the definition of the
Hausdorff measure and dimension. A complete account is available in
\cite{MR3236784}.

\begin{definition}
  \label{definition:hausdorff}
  Let $E \subseteq \setR^n$ be some set and let
  $f: \setR^+ \to \setR^+$ be an increasing and continuous function
  such that $f(r) \to 0$ as $r \to 0$, which we refer to as a
  dimension function. For any $\delta > 0$ we define
  \begin{equation*}
    \Hm^f_\delta(E) =
    \inf
    \set*{\sum f(\diam{U_i})}
         {\text{$\seq{U_i}$ is a cover of $E$ with $\diam{U_i} < \delta$}}
  \end{equation*}
  where $\diam{U_i} = \sup\set*{\abs{x-y}}{x,y \in U_i}$ is the
  diameter of $U_i$. We now define the \emph{(outer) Hausdorff
    $f$-measure} on $E$ by
  \begin{equation*}
    \Hm^f(E) = \lim_{\delta \to 0} H^f_\delta(E).
  \end{equation*}

  As a special case, for any $s \geq 0$, the \emph{Hausdorff
    $s$-measure} is the Hausdorff $f$-measure given by the dimension
  function $f(r) = r^s$ and we denote it by $\Hm^s$. It turns out
  that for any subset $E \subseteq \setR^n$ there is some number $s$
  such that $\Hm^t(E) = \infty$ for any $0 \leq t < s$ (which is an
  empty set when $s = 0$) and $\Hm^t(E) = 0$ for any $t > s$. We call
  this number the \emph{Hausdorff dimension} of $E$.
\end{definition}

There is an analogue of Khintchine's theorem for Hausdorff measures,
known as Jarn\'ik's theorem. A modern version of the theorem is the
following (see \cite[Theorem~DV]{Beresnevich:2006gk}).
\begin{theorem}[Jarn\'ik; Dickinson, Velani]
  \label{theorem:jarnik}
    Let $f$ be a dimension function such that $r^{-n}f(r) \to \infty$ as
  $r \to 0$ and $r \mapsto r^{-n}f(r)$ is decreasing. Then
  \begin{equation*}
    \Hm^f(\simapp_\psi)
    =
    \begin{cases}
      0 & \text{if } \sum_{r=1}^\infty f(\psi(r)) r^n < \infty
      \\
      \infty & \text{if } \sum_{r=1}^\infty f(\psi(r)) r^n = \infty.
    \end{cases}
  \end{equation*}
\end{theorem}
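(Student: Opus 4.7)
The proof naturally splits along the two cases, and both rest on the limsup presentation
\[
  \simapp_\psi = \bigcap_{N=1}^\infty \bigcup_{q > N} \bigcup_{\vec{p} \in \setZ^n}
  B\bigl(\vec{p}/q,\ \psi(q)\bigr).
\]
For the convergence half, my plan is to apply the Hausdorff--Cantelli lemma directly to this cover. Since the problem is $\setZ^n$-periodic it is enough to work in a bounded cube $I^n$; inside such a cube only $O(q^n)$ of the rationals $\vec{p}/q$ are relevant at each level $q$. The tail cover at level $N$ then contributes at most
\[
  \sum_{q > N} C\, q^n f\bigl(2\psi(q)\bigr),
\]
which tends to $0$ by hypothesis (using that $f$ is increasing and, up to an absolute constant, $f(2r) \asymp f(r)$ under the monotonicity assumption on $r^{-n}f(r)$). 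This forces $\Hm^f(\simapp_\psi \cap I^n) = 0$, and countable additivity finishes the case.

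The divergence half is the real work, and the cleanest route is to invoke the \emph{Mass Transference Principle} of Beresnevich and Velani, which converts Lebesgue-measure statements for limsup sets of balls into Hausdorff-$f$-measure statements for $f$-shrunken limsup sets, provided $r^{-n}f(r)$ is monotonic (which is in our hypotheses). Concretely, I would set
\[
  \tilde\psi(q) := f(\psi(q))^{1/n},
\]
which is decreasing since $f$ is increasing and $\psi$ decreasing. The divergence assumption rewrites as
\[
  \sum_{r=1}^\infty r^n\, \tilde\psi(r)^n
  = \sum_{r=1}^\infty r^n f(\psi(r)) = \infty,
\]
so Khintchine's theorem (Theorem~\ref{theorem:khintchine}) yields $\leb_n(\simapp_{\tilde\psi}) = \infty$. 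A standard zero-one argument, using $\setZ^n$-invariance of $\simapp_{\tilde\psi}$ and Lebesgue density, then upgrades this to full Lebesgue measure in every bounded open set. Since the balls defining $\simapp_{\tilde\psi}$ are exactly the $f$-enlargements (with the appropriate radius $\tilde\psi(q) = f(\psi(q))^{1/n}$) of those defining $\simapp_\psi$, the Mass Transference Principle applies and gives $\Hm^f(\simapp_\psi \cap U) = \infty$ for every open $U$, completing the proof.

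The main obstacle is exactly the divergence direction: all the geometric work is packaged inside the Mass Transference Principle, which is a deep result but, for purposes of this paper, invoked as a black box. The remaining items to verify are routine: the monotonicity of $\tilde\psi$, the promotion of positive to full Lebesgue measure by $\setZ^n$-invariance, and the bookkeeping showing that the radii of the $f$-enlarged balls for $\simapp_\psi$ coincide with $\tilde\psi(q)$, so that the hypotheses of the MTP are literally satisfied.
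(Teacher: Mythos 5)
The paper does not actually prove this theorem: it states it and cites Beresnevich--Dickinson--Velani, while remarking that it follows from Khintchine's theorem via the Mass Transference Principle. Your proposal follows exactly this remark, so structurally you are on the intended route, and the convergence half (Hausdorff--Cantelli, using the doubling bound $f(2r)\le 2^n f(r)$ coming from monotonicity of $r^{-n}f(r)$ and the $O(q^n)$ count of denominator-$q$ rationals in a bounded cube) is fine.

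The one step that does not hold as written is the zero-one upgrade in the divergence half. $\setZ^n$-invariance of $\simapp_{\tilde\psi}$ together with the Lebesgue density theorem does \emph{not} promote positive Lebesgue measure to full measure: a $\setZ^n$-periodic union of half-cubes is $\setZ^n$-invariant, has infinite measure, and is far from full, and a density point translated by integers still misses entire regions. What the Mass Transference Principle actually requires is $\leb_n(B\cap\simapp_{\tilde\psi}) = \leb_n(B)$ for every ball $B$. The honest ways to obtain this are either (a) quote the divergence half of Khintchine's theorem in its genuine form, which asserts that the \emph{complement} of $\simapp_{\tilde\psi}$ is Lebesgue-null (the formulation $\leb_n(\simapp_{\tilde\psi}) = \infty$ appearing in the paper's Theorem~\ref{theorem:khintchine} is a weaker paraphrase of this), or (b) invoke a genuine zero-one law such as Cassels'/Gallagher's for $\simapp$-type limsup sets, which is a nontrivial theorem whose proof relies on much more than periodicity and density. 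With that input secured by either route, the remaining bookkeeping --- that $\tilde\psi(q) = f(\psi(q))^{1/n}$ is decreasing, that the $f$-enlargements of the balls $B(\vec p/q,\psi(q))$ are precisely $B(\vec p/q,\tilde\psi(q))$, and that $r^{-n}f(r)\to\infty$ forces $\Hm^f(B)=\infty$ --- is correct, and the conclusion $\Hm^f(\simapp_\psi\cap B)=\infty$ for every ball $B$ follows.
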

When $f(r) = r^n$ the conclusion of the theorem is the same as that of
Khintchine's theorem, but Jarn\'ik's theorem does not imply
Khintchine's theorem as $f$ does not satisfy the growth
condition. However, it is a rather surprising fact that Khintchine's
theorem implies Jarn\'ik's theorem by the Mass Transference Principle
of Beresnevich and Velani \cite{MR2259250}. The essence of this
principle is that when we're rescaling the measure, we also have to
rescale the balls in the limsup set.

Let $M \subset \setR^n$ be a manifold. Our problem is to study the
metric nature of the set
\begin{equation*}
  \simapp_\psi(M) = \simapp_\psi \cap M
\end{equation*}
This problem originates in a problem of Mahler, who conjectured that
almost all points (with respect to the induced Lebesgue measure) on
the Veronese curve
\begin{equation*}
  \mathcal{V} = \seq{(x, x^2, \dots, x^n)}
\end{equation*}
are extremal. A manifold with this property is called extremal.  The
question of Mahler was answered in the affirmative by
Sp\-r\-ind\-žuk, and the result has later been generalized to a large
class of non-degenerate manifolds by Kleinbock and Margulis using
dynamical methods and the Dani-Margulis correspondence
\cite{MR1652916}.

Having established the correct exponent for approximation on $M$, two
natural problems emerge:
\begin{enumerate}[(i)]
\item To replace functions of the form $\psi_\tau(r) = r^{-\tau}$ by
  more general approximation functions, by obtaining a Khintchine-type
  theorem for manifolds.
\item To further study the size of the null sets, by obtaining
  Hausdorff measure and dimension of the set $\simapp_\psi(M)$.
\end{enumerate}
Some general theory has been established for the first problem. For a
survey, see the monograph of Bernik and Dodson \cite{MR1727177} as
well as the more recent paper of Beresnevich \cite{MR2874641}.

It is very tempting to think that the second problem would follow from
the first by the Mass Transference Principle as in the classical
case. However, this is not quite so as we are approximating by points
\emph{outside} the manifold and we are now considering the
intersection of a limsup set with the manifold. In fact, it turns out
that the second problem is quite different from the first and depends
on the subtle arithmetic nature of the manifold. The reason for this
is the following: For many manifolds, when we have sufficiently good
approximation, the approximating points must eventually lie on the
manifold itself. To the author's knowledge, this was first observed in
\cite{MR1816807} and their argument easily generalizes to any variety
of the form $x^n + y^n = r$ where $r, n \in \setN$ are fixed natural
numbers. In the case of a circle of radius $1$ we have an abundance of
rational points, however for the circle of radius 3 or indeed the
Fermat Curve, we have only finitely many rational points and
Diophantine approximation is not possible at all.

This gives rise to another type of Diophantine approximation on
manifolds, which has gained interest in the recent years: the
question of \emph{intrinsic Diophantine approximation}. In contrast,
we will refer to the previous form of approximation as \emph{ambient
  approximation}. We introduce the notation
\begin{equation*}
  \intsimapp_\psi(M) = 
    \set{\vec{x} \in M}
    {\norm{\vec{x} - \vec{p}/q} \leq \psi(q)
      \text{ for infinitely many $\vec{p}/q \in \setQ^n \cap M$}}
\end{equation*}
for the set of intrinsically $\psi$-approximable points on $M$ and for
$\psi_\tau(r) = r^{-\tau}$ we define
$\intsimapp_\tau = \intsimapp_{\psi_\tau}$.

As there is no known method for determining whether a given variety
has infinitely many rational points, a general theory for
intrinsic Diophantine approximation is far away. However, some results
have been obtained in special cases: The case of the circle is
well-understood \cite{MR1816807}, as is the case of certain polynomial
curves \cite{Budarina:2010dd}.
More recently, results for spheres \cite{1301.0989} and
more general quadratic surfaces \cite{1405.7650} as well as homogenous
varieties \cite{MR3252026} have been obtained by dynamic methods.

Throughout we will use the Vinogradov notation, that is, for
$a, b > 0$, $a \ll b$ will mean that there exists a constant $c > 0$
such that $a \leq c b$.

\section{Statement and proof of main results}
Let $P_1, \dots, P_m \in \setZ[x_1, \dots, x_n]$ be integer
polynomials in $n$ variables, and consider a variety of the form
\begin{equation*}
  \Gamma = \set{(\vec{x}, \vec{y}) \in \setR^n \times \setR^m}
  {y_1 = P_1(\vec{x}), \dots, y_m = P_m(\vec{x})}.
\end{equation*}
Put $d_j = \deg P_j$ and let $d = \max_j d_j$ be the maximum
degree. In this paper, we aim to establish a Jarn\'ik-type zero-infinity
law for the Hausdorff measure of $\intsimapp_\psi(\Gamma)$. In the
case where $n=1$ this has been studied previously by N. Budarina,
D. Dickinson and J. Levesley in \cite{Budarina:2010dd}.
The special case of Veronese manifolds is covered in \cite[§2]{1509.05439}.
Furthermore,
in the case where the defining polynomials only depend on one variable
this has been studied by J. Schleischitz \cite{1601.02810v1}. Our
main result is the following theorem.
\begin{theorem}
  \label{theorem:main}
  Let $\psi$ be an approximation function and let $f$ be a dimension
  function such that for any $\delta > 0$ we have
  $f(\psi(\delta r)) \ll f(\psi(r))$ when $r$ is sufficiently large,
  and for any $C > 0$ we have $f(Cx) \ll f(x)$ when $x$ is
  sufficiently small. Suppose that $r^{-n}f(r) \to \infty$ as
  $r \to 0$ and $r \mapsto r^{-n}f(r)$ is decreasing. Finally, write
  $P_i = P_{i,0} + \dots + P_{i,d}$ where $P_{i,k}$ are homogenous
  polynomials of degree $k$, and suppose that the only common point of
  vanishing for $\seq{P_{i,d}}_{i=1}^m$ over $\ac{\setQ}$ is $0$. The
  Hausdorff $f$ measure of $\intsimapp_\psi(\Gamma)$ satisfies
    \begin{equation*}
    \Hm^f(\intsimapp_\psi(\Gamma)) =
    \begin{cases}
      0 & \text{if } \sum_{r=1}^\infty r^n f(\psi(r^d)) < \infty
      \\
      \infty & \text{if } \sum_{r=1}^\infty r^n f(\psi(r^d)) = \infty.
    \end{cases}
    \end{equation*}
    Furthermore, if $r^d\psi(r) \to 0$ as $r \to \infty$ we have
    $\intsimapp_\psi(\Gamma) = \simapp_\psi(\Gamma)$.
\end{theorem}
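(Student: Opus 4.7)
The plan is to reduce the intrinsic problem on $\Gamma$ to a standard simultaneous-approximation problem in $\setR^n$ and then apply Jarn\'ik's theorem (Theorem~\ref{theorem:jarnik}) with a rescaled approximation function. The parametrization $\vec{x} \mapsto (\vec{x},P_1(\vec{x}),\dots,P_m(\vec{x}))$ is bi-Lipschitz on every compact piece of $\setR^n$, so Hausdorff $f$-measure transfers between $\Gamma$ and $\setR^n$ up to multiplicative constants, and the rational points of $\Gamma$ come from rational parameters $\vec{p}/q \in \setQ^n$ with $\gcd(\vec{p},q)=1$. The crux is therefore to understand the denominator $q'$ of the point $(\vec{p}/q,P_1(\vec{p}/q),\dots,P_m(\vec{p}/q)) \in \setQ^{n+m}$ in terms of $q$.

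The key arithmetic step, and the main obstacle, is the uniform comparison $q' \asymp q^d$. Writing each $P_j(\vec{p}/q)$ over the common denominator $q^d$ one finds $q' = q^d/G$ with $G = \gcd(q^{d-1}, n_1, \dots, n_m)$, where $n_j := q^d P_j(\vec{p}/q) \equiv P_{j,d}(\vec{p}) \pmod{q}$. The hypothesis that the $P_{i,d}$ have only the origin as common zero over $\ac{\setQ}$ invokes the projective Nullstellensatz, yielding an integer $c$ and integer polynomials $\tilde A_{ij}$ with $c\,x_i^{N_i} = \sum_j \tilde A_{ij}(\vec{x})\,P_{j,d}(\vec{x})$ for each $i$. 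Evaluating at $\vec{p}$ and comparing $r$-adic valuations, the coprimality $\gcd(\vec{p},q)=1$ forces $\min_j v_r(P_{j,d}(\vec{p})) \leq v_r(c)$ for every prime $r$; a short case split on whether this minimum is below $v_r(q)$ then bounds $v_r(G) \leq (d-1)v_r(c)$, hence $G \mid c^{d-1}$ uniformly.

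Given $q' \asymp q^d$, and using Lipschitz continuity of the $P_j$ on any compact piece $K \subset \setR^n$, the intrinsic approximation condition $\norm{(\vec{x},P_j(\vec{x})) - (\vec{p}/q,P_j(\vec{p}/q))} \leq \psi(q')$ unwinds to $\norm{\vec{x}-\vec{p}/q} \ll \psi(q^d)$, the multiplicative constants being absorbed by the regularity assumptions $f(\psi(\delta r)) \ll f(\psi(r))$ and $f(Cx) \ll f(x)$. Setting $\psi^*(r) := \psi(r^d)$, the set $\intsimapp_\psi(\Gamma) \cap K$ is bi-Lipschitz comparable to $\simapp_{\psi^*} \cap K$ in $\setR^n$, and Theorem~\ref{theorem:jarnik} applied to $\psi^*$ and $f$ yields the zero/infinity law governed by $\sum r^n f(\psi^*(r)) = \sum r^n f(\psi(r^d))$, matching the statement. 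Covering $\Gamma$ by countably many such pieces globalizes the conclusion.

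For the coincidence $\intsimapp_\psi(\Gamma) = \simapp_\psi(\Gamma)$ under $r^d \psi(r) \to 0$, I would run a Liouville-type argument. Any ambient rational $(\vec{p}',\vec{r}')/q' \in \setQ^{n+m}$ approximating a point of $\Gamma$ to within $\psi(q')$ must, by Lipschitz continuity of the $P_j$, satisfy $|r'_j/q' - P_j(\vec{p}'/q')| \ll \psi(q')$. Since $P_j(\vec{p}'/q')$ has denominator dividing $(q')^d$, the integer $(q')^d(r'_j/q' - P_j(\vec{p}'/q'))$ has absolute value $\ll (q')^d \psi(q')$, which tends to $0$ and hence vanishes for all large $q'$; the approximating point must therefore lie on $\Gamma$.
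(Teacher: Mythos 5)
Your proposal is correct and follows the same high-level skeleton as the paper: reduce to a compact piece via the bi-Lipschitz map $F(\vec{x}) = (\vec{x},P_1(\vec{x}),\dots,P_m(\vec{x}))$, express the set as a limsup over rational parameters $\vec{p}/q$, establish the two-sided denominator comparison $H(F(\vec{p}/q)) \asymp q^d$, and then invoke Jarn\'ik's theorem with the rescaled approximation function $\psi(r^d)$, absorbing constants via the regularity hypotheses on $f$. The final Liouville argument for $\intsimapp_\psi(\Gamma)=\simapp_\psi(\Gamma)$ matches the one the paper cites from Budarina--Dickinson--Levesley.

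The genuine difference is in how you obtain the lower bound $H(F(\vec{p}/q)) \gg q^d$, which is the heart of the convergence case. The paper homogenizes $F$ to a projective morphism $F^*:\proj^n\to\proj^{n+m}$ and then quotes the functoriality of heights under morphisms (Hindry--Silverman, Theorem~B.2.5), the hypothesis on the degree-$d$ parts $\{P_{i,d}\}$ being exactly what makes $F^*$ a morphism. You instead unpack that black box: you apply the projective Nullstellensatz to get $c\,x_i^{N_i}=\sum_j \tilde A_{ij}\,P_{j,d}$ with $c\in\setZ$, and then run a prime-by-prime valuation argument using $\gcd(\vec p,q)=1$ to show the cancellation factor $G=\gcd(q^{d-1},n_1,\dots,n_m)$ divides $c^{d-1}$ uniformly. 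That is essentially a self-contained re-derivation of the relevant special case of the cited height estimate; your case split is correct (in Case~$1$ one gets $v_r(G)\leq v_r(c)$, in Case~$2$ one gets $v_r(G)\leq (d-1)v_r(q)\leq (d-1)v_r(c)$, and for $d=1$ the gcd $G=1$ trivially). What the paper's route buys is brevity and a clean signal of where the morphism hypothesis enters; what your route buys is elementariness and transparency about why the hypothesis is needed. One small inefficiency worth noting: for the divergence half you invoke the full two-sided bound $q'\asymp q^d$, whereas only the trivial upper bound $H(F(\vec p/q))\leq q^d$ is needed there, and indeed the paper's divergence lemma does not assume the Nullstellensatz condition at all. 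This does not affect the theorem as stated, since the hypothesis is present anyway, but the paper's cleaner separation isolates which half of the result actually depends on the arithmetic hypothesis on $\Gamma$.
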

As a corollary, we derive the Hausdorff dimension of
$\intsimapp_\tau(\Gamma)$.
\begin{corollary}
  \label{corollary}
  Suppose the polynomials defining $\Gamma$ satisfy the condition of
  the theorem. For $\tau > (n+1)/nd$, the Hausdorff dimension of
  of $\intsimapp_\tau(\Gamma)$ is given by
  \begin{equation*}
    \dim \intsimapp_\tau(\Gamma) = \frac{1+n}{d\tau}.
  \end{equation*}
  Furthermore, if $\tau > d$ we also have
  $\simapp_\tau(\Gamma) = \intsimapp_\tau(\Gamma)$.
\end{corollary}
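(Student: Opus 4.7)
The plan is to apply Theorem~\ref{theorem:main} directly with $\psi = \psi_\tau$ and the family of dimension functions $f_s(r) = r^s$ for $0 < s < n$, and then read off the critical exponent from the convergence of the resulting series.

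First I would check that the hypotheses of the main theorem hold for this choice. For $f_s(r) = r^s$ and $\psi_\tau(r) = r^{-\tau}$, one has $f_s(\psi_\tau(\delta r)) = \delta^{-\tau s} r^{-\tau s} \ll r^{-\tau s} = f_s(\psi_\tau(r))$ and $f_s(Cx) = C^s x^s \ll x^s = f_s(x)$ for any fixed $\delta, C > 0$, so both growth conditions are satisfied. The map $r \mapsto r^{-n} f_s(r) = r^{s-n}$ is decreasing and tends to infinity as $r \to 0$ precisely when $s < n$, so the theorem applies throughout the admissible range $0 < s < n$.

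Next I would analyse the dichotomy series. Substituting gives
\begin{equation*}
  \sum_{r=1}^\infty r^n f_s(\psi_\tau(r^d)) = \sum_{r=1}^\infty r^{n - d\tau s},
\end{equation*}
which converges iff $s > (n+1)/(d\tau)$ and diverges iff $s \leq (n+1)/(d\tau)$. Hence $\Hm^{s}(\intsimapp_\tau(\Gamma)) = 0$ whenever $s > (n+1)/(d\tau)$ and $\Hm^{s}(\intsimapp_\tau(\Gamma)) = \infty$ whenever $(n+1)/(d\tau) > s$ (still requiring $s < n$ for the hypothesis). The assumption $\tau > (n+1)/(nd)$ is exactly what is needed to guarantee that the critical value $s_0 = (n+1)/(d\tau)$ lies in $(0, n)$, so the admissibility constraint does not obstruct the argument. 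By the definition of Hausdorff dimension this yields $\dim \intsimapp_\tau(\Gamma) = (n+1)/(d\tau)$.

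For the final claim, observe that for $\psi_\tau$ one has $r^d \psi_\tau(r) = r^{d - \tau}$, which tends to $0$ as $r \to \infty$ precisely when $\tau > d$. The equality $\intsimapp_\tau(\Gamma) = \simapp_\tau(\Gamma)$ then follows directly from the last sentence of Theorem~\ref{theorem:main}. The only real point to watch is the verification that the technical conditions on $f$ and $\psi$ hold; beyond that, the corollary is a straightforward specialisation of the main theorem.
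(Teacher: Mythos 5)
Your proposal is correct and follows essentially the same route as the paper: both specialise Theorem~\ref{theorem:main} to $\psi = \psi_\tau$ and power dimension functions $f(r) = r^s$, reduce the dichotomy series to $\sum r^{n - d\tau s}$, and read off the critical exponent $s_0 = (n+1)/(d\tau)$, with the condition $\tau > (n+1)/(nd)$ ensuring $s_0 < n$ so the theorem's hypotheses on $f$ hold near the critical value. Your verification of the two auxiliary growth conditions on $f$ is slightly more explicit than the paper's, but the argument is the same.
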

The proof will be split into three key lemmas, which establish the
case of divergence, convergence and the equality of ambient and
intrinsic approximation separately. Our approach mimicks that of the
proof of the main theorem in \cite{Budarina:2010dd}, where the novelty
in our argument is the use of algebraic geometry to obtain an upper
bound in the case of convergence.

Before proceeding, we make some reductions in order to write
$\intsimapp_\psi(\Gamma)$ as a manageable limsup set. As we are aiming
for a zero-infinity law, it suffices to show that the Hausdorff
measure of the $\psi$-approximable points are either full or null for
sets of the form
\begin{equation*}
  \Gamma_I = \seq{(\vec{x}, P_1(\vec{x}), \dots, P_m(\vec{x})) \in I^n \times \setR^m}.
\end{equation*}
where $I \subset \setR$ is some arbitrary bounded interval. For
notational simplicity we take $I = [0,1]$, however the argument does
not use this in any essential way.

Define the function $F: \setR^n \to \Gamma$ by
$F(\vec{x}) = (\vec{x}, P_1(\vec{x}), \dots, P_m(\vec{x}))$. Now, by
the mean value theorem, we can find a constant $K \geq 1$ such that
for any $\vec{x_1}, \vec{x_2} \in I^n$ we have
\begin{equation*}
  \norm{\vec{x_1} - \vec{x_2}}
  \leq
  \norm{F(\vec{x_1}) - F(\vec{x_2})}
  \leq
  K   \norm{\vec{x_1} - \vec{x_2}},
\end{equation*}
so $F$ is a bi-Lipschitz function on $I$. Since $f(Kx) \ll f(x)$ when
$x$ is sufficiently small, the Hausdorff measure is changed by at most
a constant under a bi-Lipschitz mapping. It thus suffices to show that
the measure is full or null for the set
\begin{equation*}
  \myapp_\psi(\Gamma_I) = \set{\vec{x} \in I^n}{F(\vec{x}) \in \intsimapp_\psi(\Gamma)}.
\end{equation*}

\begin{definition}
  For a rational vector $\vec{x}$ in $\setR^k$, we define the
  \emph{affine height} of $\vec{x}$ to be the least natural number
  $D$ such that
  \begin{equation*}
    \vec{x} = (r_1/D, \dots, r_k/D)
    \text{ and }
    \gcd(r_1, \dots, r_k, D) = 1
  \end{equation*}
  for some $r_1, \dots, r_k \in \setZ$.
  We also define the height function $H:\setQ^n \to \setN$ by $H(\vec{x}) = D$.
\end{definition}

We are now in a position to write $\myapp_\psi(\Gamma_I)$ as a limsup
set. Recall that $\myapp_\psi(\Gamma_I)$ consists of the set of
$\vec{x} \in I^n$ such that
$\norm{F(\vec{x}) - \vec{r}} \leq \psi(H(\vec{r}))$ for infinitely
many $\vec{r} \in \Gamma_I \cap \setQ^{n+m}$. Such rationals are
necessarily of the form $\vec{r} = F(\vec{p}/q)$ for some rational
$\vec{p}/q \in \setQ^n$. We thus have
\begin{equation}
  \begin{split}
    \label{eq:cover}
    \mylimsup B\left(\frac{\vec{p}}{q},
      \frac{\psi(H(F(\vec{p}/q)))}{K}\right) \subseteq
    \myapp_\psi(\Gamma_I)
    \\
    \myapp_\psi(\Gamma_I) \subseteq \mylimsup
    B\left(\frac{\vec{p}}{q}, \psi(H(F(\vec{p}/q)))\right).
  \end{split}
\end{equation}

\begin{lemma}[Divergence case]
  \label{lemma:divergence}
  Let $\psi$ be an approximation function. Let $f$ be a dimension
  function such that for any $C > 0$ we have $f(Cx) \ll f(x)$ when $x$
  is sufficiently small. Suppose that $r^{-n}f(r) \to \infty$ as
  $r \to 0$ and $r \mapsto r^{-n}f(r)$ is decreasing. If
  $\sum_{r=1}^\infty r^n f(\psi(r^d)) = \infty$ then
  $\Hm^f(\intsimapp_\psi(\Gamma)) = \infty$.
\end{lemma}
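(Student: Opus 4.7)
The plan is to pull the problem back through the bi-Lipschitz map $F$ and reduce it to the divergence part of the Jarník--Dickinson--Velani theorem (Theorem~\ref{theorem:jarnik}) applied to a suitably rescaled approximation function. Since $F$ is bi-Lipschitz and Hausdorff $f$-measure is preserved up to constants under bi-Lipschitz maps (using the hypothesis $f(Cx) \ll f(x)$), it will suffice to prove $\Hm^f(\myapp_\psi(\Gamma_I)) = \infty$ for $I = [0,1]$, as then $\intsimapp_\psi(\Gamma) \supseteq F(\myapp_\psi(\Gamma_I))$ also has infinite measure.

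The key arithmetic input is that for any $\vec{p} \in \setZ^n$ and $q \in \setN$ with $\gcd(p_1,\dots,p_n,q) = 1$ one has $H(F(\vec{p}/q)) \le q^d$: indeed $q^{d-1} p_i \in \setZ$ and, since each $P_j$ has degree at most $d$, also $q^d P_j(\vec{p}/q) \in \setZ$, so all coordinates of $F(\vec{p}/q)$ admit the common denominator $q^d$. Combined with the monotonicity of $\psi$ and the first inclusion in \eqref{eq:cover}, this gives
\begin{equation*}
  E := \mylimsup B\left(\frac{\vec{p}}{q}, \frac{\psi(q^d)}{K}\right) \subseteq \myapp_\psi(\Gamma_I).
\end{equation*}

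I would then set $\tilde\psi(q) = \psi(q^d)/K$, which is still decreasing. Outside the countable (hence $\Hm^f$-null) set of rational points, $E$ coincides with the restriction to $I^n$ of the classical $\tilde\psi$-approximable set $\simapp_{\tilde\psi}$. Applying the hypothesis $f(Cx) \ll f(x)$ with $C = K$ to $x = \tilde\psi(q)$ yields $f(\psi(q^d)) \ll f(\tilde\psi(q))$ for $q$ large, so the assumed divergence of $\sum r^n f(\psi(r^d))$ passes to the divergence of $\sum r^n f(\tilde\psi(r))$. The growth and monotonicity conditions on $r^{-n} f(r)$ are unaffected, so Theorem~\ref{theorem:jarnik} applies to $\tilde\psi$ and delivers $\Hm^f(\simapp_{\tilde\psi}) = \infty$.

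The main obstacle is the last step, where one must promote this global statement to the localized assertion $\Hm^f(\simapp_{\tilde\psi} \cap I^n) = \infty$, since only the localized version transfers through $F$. I would handle this by invoking the ubiquity form of the Dickinson--Velani theorem, which in fact produces infinite Hausdorff $f$-measure on every fixed bounded open set under the divergence condition; alternatively a $\setZ^n$-translation argument combined with the zero-infinity alternative inherent in Theorem~\ref{theorem:jarnik} works. Either route gives $\Hm^f(E) = \infty$, and the lemma follows by bi-Lipschitz transfer through $F$. Notably, the algebraic hypothesis on the leading homogeneous parts $P_{i,d}$ plays no role here: the trivial upper bound $H(F(\vec{p}/q)) \le q^d$ is all that the divergence direction needs.
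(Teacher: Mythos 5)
Your proposal is correct and follows essentially the same route as the paper: both use the trivial bound $H(F(\vec{p}/q)) \leq q^d$, pass via the first inclusion in \eqref{eq:cover} to the divergence case of Theorem~\ref{theorem:jarnik} applied to $\tilde\psi(q) = \psi(q^d)/K$, invoke $f(Kx) \ll f(x)$ to transfer the divergence of the series, and conclude by bi-Lipschitz invariance of $\Hm^f$. The one small point the paper addresses explicitly that you leave implicit is the degenerate case $\psi(r^d) \not\to 0$, where the regularity hypothesis on $f$ (which only applies for small arguments) cannot be invoked, but the divergence of $\sum r^n f(\tilde\psi(r))$ is then trivial.
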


\begin{proof}[Proof of Lemma~\ref{lemma:divergence}]
  Let $\vec{p}/q \in I^n$ be some rational vector. It is clear that
  $q^d$ is a common multiple of the denominators in $F(\vec{p}/q)$ so
  $H(F(\vec{p}/q)) \leq q^d$. As $\psi$ is decreasing, we have
  \begin{equation*}
    \frac{\psi(H(F(\vec{p}/q)))}{K} \geq \frac{\psi(q^d)}{K}.
  \end{equation*}
  This implies that
  \begin{equation*}
    \mylimsup B\left(\frac{\vec{p}}{q}, \frac{\psi(q^d)}{K}\right)
    \subseteq V_\psi(\Gamma_I).
  \end{equation*}
  The set on the left is simply the set of
  $\phi(q) \coloneqq \psi(q^d)/K$-approximable points on $I^n$. By
  Theorem~\ref{theorem:jarnik} this set is full if
  \begin{equation*}
    \sum_{r=1}^\infty f(\phi(r))r^n = \infty.
  \end{equation*}
  If $\psi(r^d)$ does not tend to $0$ as $r \to \infty$ this is
  trivial. Otherwise, we can apply the estimate
  $f(\psi(r^d)/K) \gg f(\psi(r^d))$ when $r$ is large to obtain
  \begin{equation*}
    \sum_{r=1}^\infty f(\psi(r^d)/K) \gg
    \sum_{r=1}^\infty f(\psi(r^d)) = \infty.
  \end{equation*}
\end{proof}

\begin{lemma}[Convergence case]
  \label{lemma:convergence}
  Let $\psi$ be an approximation function and let $f$ be a dimension
  function such that for any $\delta > 0$ we have
  $f(\psi(\delta r)) \ll f(\psi(r))$ when $r$ is sufficiently large,
  and for any $C > 0$ we have $f(Cx) \ll f(x)$ when $x$ is
  sufficiently small. Write $P_i = P_{i,0} + \dots + P_{i,d}$ where
  $P_{i,k}$ are homogenous polynomials of degree $k$, and suppose that
  the only common point of vanishing for $\seq{P_{i,d}}_{i=1}^m$ over
  $\ac{\setQ}$ is $0$. If $\sum_{r=1}^\infty r^n f(\psi(r^d)) <
  \infty$ then $\Hm^f(\intsimapp_\psi(\Gamma)) = 0$.
\end{lemma}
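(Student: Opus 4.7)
The plan is to apply the convergence part of the Hausdorff-Cantelli lemma to the cover
\begin{equation*}
  \myapp_\psi(\Gamma_I) \subseteq \mylimsup B\Bigl(\frac{\vec{p}}{q},\, \psi(H(F(\vec{p}/q)))\Bigr)
\end{equation*}
provided by~\eqref{eq:cover}. Since for a fixed $q$ there are $O(q^n)$ admissible numerators and the hypothesis $f(\psi(\delta r)) \ll f(\psi(r))$ lets us absorb multiplicative constants in the argument of $\psi$, it would suffice to establish a uniform lower bound $H(F(\vec{p}/q)) \gg q^d$. The Hausdorff-Cantelli test sum would then be bounded by $\sum_q q^n f(\psi(q^d)) < \infty$, which via the bi-Lipschitz reduction already carried out would yield $\Hm^f(\intsimapp_\psi(\Gamma)) = 0$.

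To attack the height bound I would clear denominators by $q^d$. Writing $P_i = P_{i,0} + \dots + P_{i,d}$ and setting
\begin{equation*}
  Q_i(\vec{p}, q) = q^d P_i(\vec{p}/q) = \sum_{k=0}^{d} q^{d-k} P_{i,k}(\vec{p}) \in \setZ,
\end{equation*}
one has $H(F(\vec{p}/q)) = q^d / g$ where
\begin{equation*}
  g = \gcd\bigl(q^{d-1}p_1, \dots, q^{d-1}p_n,\, Q_1(\vec{p}, q), \dots, Q_m(\vec{p}, q),\, q^d\bigr).
\end{equation*}
The coprimality $\gcd(p_1, \dots, p_n, q) = 1$ gives immediately $g \mid q^{d-1}$, so every prime divisor of $g$ divides $q$; moreover $Q_i(\vec{p}, q) \equiv P_{i,d}(\vec{p}) \pmod{q}$.

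The decisive step, and the only one that uses algebraic geometry, is to bound $g$ uniformly in $\vec{p}$ and $q$. The hypothesis that the top forms $P_{1,d}, \dots, P_{m,d}$ have no common zero over $\ac{\setQ}$ besides the origin lets me invoke the projective Hilbert Nullstellensatz: there exist $M, N \in \setN$ and $g_{ij} \in \setZ[x_1, \dots, x_n]$ such that
\begin{equation*}
  M\, x_j^N = \sum_{i=1}^m g_{ij}(\vec{x})\, P_{i,d}(\vec{x}), \qquad j = 1, \dots, n.
\end{equation*}
For a prime $\ell \mid g$ there is some $j$ with $\ell \nmid p_j$, and evaluating the relation at $\vec{p}$ and reading $\ell$-adically forces $\min_i v_\ell(P_{i,d}(\vec{p})) \leq v_\ell(M)$. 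A short case split according to whether $v_\ell(q) > v_\ell(M)$ or $v_\ell(q) \leq v_\ell(M)$, combined with $g \mid Q_i$ and $Q_i \equiv P_{i,d}(\vec{p}) \pmod{q}$, then gives the uniform valuation bound $v_\ell(g) \leq (d-1)\, v_\ell(M)$. Taking the product over primes yields $g \leq M^{d-1}$ and hence $H(F(\vec{p}/q)) \geq q^d / M^{d-1}$.

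Once this height bound is in hand, monotonicity of $\psi$ and the condition $f(\psi(\delta r)) \ll f(\psi(r))$ give $f(\psi(H(F(\vec{p}/q)))) \ll f(\psi(q^d))$, and the Hausdorff-Cantelli convergence lemma finishes the proof. The main obstacle is the algebraic-geometric step: producing the Nullstellensatz relation with integer coefficients, and carrying out the $\ell$-adic bookkeeping to turn it into a height estimate that is genuinely uniform in $q$ rather than merely $O(q^{d-1})$.
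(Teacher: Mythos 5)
Your proof is correct, and its overall skeleton coincides with the paper's: everything reduces to the uniform lower height bound $H(F(\vec{p}/q)) \gg q^d$, after which monotonicity of $\psi$, the condition $f(\psi(\delta r)) \ll f(\psi(r))$, and the Hausdorff--Cantelli convergence argument applied to the cover in \eqref{eq:cover} give $\Hm^f(\intsimapp_\psi(\Gamma)) = 0$ via the bi-Lipschitz reduction. The difference lies in how the height bound is obtained. The paper homogenizes $F$ to a rational map $F^*\colon \proj^n \to \proj^{n+m}$, observes that the hypothesis on the top-degree forms is exactly the statement that $F^*$ has no base points (i.e.\ extends to a morphism over $\ac{\setQ}$), and then quotes the height inequality for morphisms (Theorem~\ref{theorem:proj-height}, Hindry--Silverman B.2.5) to get $\Hp(F^*(P)) \gg \Hp(P)^d$, which it converts to the affine statement. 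You instead re-derive the needed lower bound by hand: writing $H(F(\vec{p}/q)) = q^d/g$, you control the gcd $g$ through integer Nullstellensatz certificates $M x_j^N = \sum_i g_{ij} P_{i,d}$ and an $\ell$-adic case split, arriving at $g \leq M^{d-1}$. This is precisely the arithmetic content hidden in the proof of the cited theorem, specialized to this map, so the two arguments are morally the same; yours has the merit of being self-contained and of making explicit what the geometric hypothesis actually buys (a Nullstellensatz certificate bounding the gcd), while the paper's citation is shorter and emphasizes that the hypothesis is a morphism condition. One minor point: when $d=1$ your inequality $v_\ell(g) \leq (d-1)v_\ell(M)$ should be read as $g=1$, which indeed already follows from your earlier observation that $g \mid q^{d-1}$; the case split works but is vacuous there.
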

The key ingredient in the proof of the convergence case is to get a
lower bound on the height of $F(\vec{p}/q)$. Our approach to doing
this uses projective methods from algebraic geometry, which we briefly
introduce.

Recall that \emph{projective $n$-space} over a field $k$ is the set 
$
\proj^n(k) = (k^{n+1}\setminus\seq{0}) \bigm/ \sim
$
where $P \sim Q$ when $P = \lambda Q$ for some $\lambda \in k^*$. We
can go from $k^n$ to $\proj^n(k)$ by the embedding
$(x_1, \dots, x_n) \hookrightarrow (1 : x_1 : \dots : x_n)$. As we
cannot tell the difference between numerators and denominators in
projective space, we define a new height on these spaces.
\begin{definition}
  Let $P \in \proj^n(\setQ)$. By clearing denominators, we can write
  $P$ uniquely (up to a sign) as
  \begin{equation*}
    P = (x_0, \dots, x_n)
  \end{equation*}
  where $x_0, \dots, x_n \in \setZ$ and $\gcd(x_0, \dots, x_n) =
  1$. Define the \emph{projective height} of $P$ as
  \begin{equation*}
    \Hp(P) = \max\seq[\big]{\abs{x_0}, \dots, \abs{x_n}}.
  \end{equation*}
\end{definition}
We also want to define functions between these spaces. For this we
introduce the rational maps and the morphisms. We should remark that
our definition is more restrictive than the one usually found in the
literature, but it is sufficient for our purposes.
\begin{definition}
  Let $k$ be a field.
  A map $\phi: \proj^n(k) \to \proj^m(k)$ is called a \emph{rational map} of
  degree $d$ if
  \begin{equation*}
  \phi(P) = (f_1(P), \dots, f_m(P))
\end{equation*}
where $f_1, \dots, f_m$ are homogenous polynomials of the same degree
$d$. Note that $\phi$ is only defined at $P \in \proj^n(k)$ if the polynomials
$f_1, \dots, f_m$ do not all vanish at $P$. If $\phi$ is defined
everywhere, we say that $\phi$ is a \emph{morphism over $k$}.
\end{definition}

The way we control the height is the following theorem, which is a
special case of \cite[Theorem~B.2.5]{MR1745599}.
\begin{theorem}
  \label{theorem:proj-height}
  Let $\phi: \proj^n\big(\ac{\setQ}\big) \to \proj^m\big(\ac{\setQ}\big)$
  be a morphism of degree $d$ over the
  algebraic closure of $\setQ$. For all $P \in \proj^n(\setQ)$ we have
  \begin{equation*}
    \Hp(P)^d \ll \Hp(\phi(P)) \ll \Hp(P)^d
  \end{equation*}
  where the implied constants depend on $\phi$ but not on $P$.
\end{theorem}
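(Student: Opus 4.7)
The plan is to handle the two inequalities separately: the upper bound is essentially a triangle inequality, while the lower bound relies on the projective Nullstellensatz to invert the morphism up to a power of the irrelevant ideal. Write $\phi = (f_0, \dots, f_m)$ with the $f_j$ homogeneous of degree $d$, and let $(x_0, \dots, x_n)$ denote the coprime integer representative of $P \in \proj^n(\setQ)$. For the upper bound, each value $f_j(P)$ is bounded by $C_1 \Hp(P)^d$, where $C_1$ depends only on the finitely many coefficients of the $f_j$. Since $\Hp(\phi(P))$ is obtained by dividing $\max_j |f_j(P)|$ by the gcd of the $f_j(P)$, this immediately gives $\Hp(\phi(P)) \leq C_1 \Hp(P)^d$.

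For the lower bound I would invoke the projective Nullstellensatz. The assumption that $\phi$ is a \emph{morphism} over $\ac{\setQ}$ says precisely that $f_0, \dots, f_m$ have no common zero in $\proj^n(\ac{\setQ})$, so the ideal $(f_0, \dots, f_m)$ contains some power of the irrelevant ideal in $\ac{\setQ}[x_0, \dots, x_n]$. That is, there exists an integer $N$ and, for each $i$, homogeneous polynomials $g_{i,0}, \dots, g_{i,m}$ of degree $N-d$, with coefficients in a single number field $K$, such that
\begin{equation*}
  x_i^N = \sum_j g_{i,j}(x_0, \dots, x_n)\, f_j(x_0, \dots, x_n).
\end{equation*}
Clearing all denominators with a fixed positive integer $D$ absorbs all the $K$-dependence into algebraic-integer coefficients of bounded archimedean size, and gives the uniform bound $|g_{i,j}(P)| \ll \Hp(P)^{N-d}$.

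Evaluating this identity at $P$ and applying the triangle inequality,
\begin{equation*}
  |x_i|^N \leq \frac{1}{D}\sum_j |g_{i,j}(P)|\,|f_j(P)| \ll \Hp(P)^{N-d} \max_j |f_j(P)|.
\end{equation*}
Taking the maximum over $i$ yields $\max_j |f_j(P)| \gg \Hp(P)^d$. To upgrade this to a bound on $\Hp(\phi(P))$ itself, I would observe that the same identity $D x_i^N = \sum_j (D g_{i,j})(P)\, f_j(P)$ forces any common divisor of $f_0(P), \dots, f_m(P)$ to divide $D x_i^N$ for every $i$, and hence to divide $D\gcd_i (x_i^N) = D$ since $\gcd_i x_i = 1$. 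Consequently the passage from $\max_j|f_j(P)|$ to $\Hp(\phi(P))$ by removing the gcd costs at most the constant factor $D$, and $\Hp(\phi(P)) \gg \Hp(P)^d$ follows.

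The main obstacle is the bookkeeping around the Nullstellensatz step: one must ensure that $N$, the $g_{i,j}$, and the denominator $D$ can all be fixed once and for all, independently of $P$, and that working over $\ac{\setQ}$ rather than $\setQ$ does not spoil the archimedean bound on the coefficients. Both issues dissolve once one descends to a single finite extension containing every coefficient in sight; after that, the argument is routine polynomial estimation.
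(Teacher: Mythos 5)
The paper does not actually prove Theorem~\ref{theorem:proj-height}; it cites it as a special case of Theorem~B.2.5 in Hindry--Silverman, so there is no in-paper proof to compare against. Your argument is correct and is in substance the standard proof from that reference: the upper bound is the triangle inequality applied coefficient-by-coefficient, and the lower bound uses the projective Nullstellensatz to write $x_i^N = \sum_j g_{i,j} f_j$, evaluates at the coprime integer representative of $P$, and extracts both $\max_j \abs{f_j(P)} \gg \Hp(P)^d$ and the bound $\gcd_j f_j(P) \leq D$ from the same identity.

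One point you flag as an "obstacle" is actually a non-issue and can be removed cleanly. Since the $f_j$ and the targets $x_i^N$ all lie in $\setQ[x_0,\dots,x_n]$, the identity $x_i^N = \sum_j g_{i,j} f_j$ with $g_{i,j}$ homogeneous of degree $N-d$ is a linear system over $\setQ$ in the unknown coefficients of the $g_{i,j}$; solvability over $\ac{\setQ}$ (which the Nullstellensatz provides) implies solvability over $\setQ$ by elementary linear algebra. So you may take $g_{i,j} \in \setQ[x_0,\dots,x_n]$ from the start, clear denominators to get $G_{i,j} = D g_{i,j} \in \setZ[x_0,\dots,x_n]$, and then the whole divisibility step $\gcd_j f_j(P) \mid D x_i^N$ takes place in $\setZ$. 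This avoids having to reason about algebraic integers, archimedean embeddings, or descent to a number field, which in your write-up is slightly hand-wavy (for instance, once the $g_{i,j}$ have non-rational coefficients, $\abs{g_{i,j}(P)}$ needs an embedding chosen, and the divisibility must be checked in $\mathcal{O}_K$ and then pulled back to $\setZ$). The streamlined rational-coefficient version closes that gap and matches what Hindry--Silverman do.
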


\begin{proof}[Proof of Lemma~\ref{lemma:convergence}]
  In order to apply Theorem~\ref{theorem:proj-height} we need to
  extend $F$ to $\proj^n$. For each of the defining polynomials
  $P_i \in \setZ[X_1, \dots, X_n]$ write
  $P_i = P_{i,0} + \dots + P_{i,d}$ where $P_{i,k}$ is a homogenous
  polynomial of degree $k$. Define the degree $d$ homogenization of
  $P_i$ by
  \begin{equation*}
    P_i^* = X_0^d P_{i,0} + X_0^{d-1} P_{i,1} + \dots + P_{i,d}.
  \end{equation*}
  Note that $P_i^* \in \setZ[X_0, \dots, X_n]$ is a homogenous
  polynomial of degree $d$. We now define the (rational) map
  $F^*: \proj^n \to \proj^{n+m}$ by
  \begin{equation*}
    F^*(X_0, \dots, X_n) = (X_0^d, X_0^{d-1} X_1, \dots, X_0^{d-1}X_n,
    P_1^*(X_0, \dots, X_n), \dots, P_m^*(X_0, \dots, X_n)).
  \end{equation*}
  In the affine patch $X_0 = 1$ this corresponds to the map $F$ from
  above. We further claim that this map is a morphism. For
  $X_0 \neq 0$ we clearly have $X_0^d \neq 0$ so it is
  well-defined. If $X_0 = 0$ the defining polynomials vanish if and
  only if $P_{1,d},\dots, P_{n,d}$ have a common point of vanishing
  away from $0$ over the algebraic closure of $\setQ$, but this does
  not happen by our assumption. Thus, $F^*$ is a morphism over the
  algebraic numbers.

  Now let $\vec{p}/q \in I^n$ be some rational vector. Since $I$ is a
  bounded interval, the ratio between the projective and affine
  heights is at most a multiplicative constant. We now have
  \begin{equation*}
    q^d \leq \Hp((1, p_1/q, \dots, p_n/q))^d
    \ll\Hp (F^*(1, p_1/q, \dots, p_n/q))
    \ll H(F(\vec{p}/q)).
  \end{equation*}
  Here the implied constants only depend on the variety $\Gamma$ and
  the interval $I$. Let $\delta > 0$ be the constant such that
  $H(F(\vec{p}/q)) \geq \delta q^d$. Now by the inclusion \eqref{eq:cover}
  and the estimate $f(\psi(\delta q^d)) \ll f(\psi(q^d))$ we have for any
  $N \in \setN$
  \begin{align*}
    \Hm^f(V_\psi(\Gamma_I))
    \ll \sum_{q > N} \sum_{\mysubstack} f(\psi(H(F(\vec{p}/q))))
    \ll \sum_{q > N} q^n f(\psi(q^d))
    < \infty.
  \end{align*}
  So the Hausdorff measure is bounded by the tail of a convergent
  series, and we conclude that $\Hm^f(V_\psi(\Gamma_I)) = 0$.
\end{proof}

Finally, the equivalence of intrinsic and ambient approximation is
given by the following lemma. This was already shown in full
generality in \cite[Lemma~1]{Budarina:2010dd} and we hence omit the
proof.
\begin{lemma}
  \label{lemma:intrinsic-is-ambient}
  Let $\psi: \setN \to \setR^+$ be an approximation function
  satisfying the growth condition $r^d \psi(r) \to 0$ as
  $r \to \infty$. Let $\vec{x} \in \simapp_\psi(\Gamma)$. If
  \begin{equation*}
    \norm{\vec{x} - \vec{r}} \leq \psi(H(\vec{r}))
  \end{equation*}
  for $\vec{r} \in \setQ^{n+m}$ with $H(\vec{r})$ sufficiently large,
  then $\vec{r} \in \Gamma$.
\end{lemma}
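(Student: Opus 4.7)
The plan is to argue by contradiction. Write $\vec{r} = (\vec{u}, v_1, \dots, v_m) \in \setQ^n \times \setQ^m$ and let $D = H(\vec{r})$, so that every coordinate of $\vec{r}$ has a denominator dividing $D$. Since $\vec{x} = (\vec{a}, P_1(\vec{a}), \dots, P_m(\vec{a}))$ lies on $\Gamma$, the condition $\vec{r} \in \Gamma$ is equivalent to $v_i = P_i(\vec{u})$ for every $i = 1, \dots, m$. I shall show that each of these equalities must hold whenever $D$ is sufficiently large.

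First I would obtain an upper bound on $\abs{P_i(\vec{u}) - v_i}$. The hypothesis $\norm{\vec{x} - \vec{r}} \leq \psi(D)$ yields both $\norm{\vec{u} - \vec{a}} \leq \psi(D)$ and $\abs{P_i(\vec{a}) - v_i} \leq \psi(D)$. Since the growth condition forces $\psi(D) \to 0$, the point $\vec{u}$ lies in a fixed compact neighbourhood of $\vec{a}$ for all large $D$, and the mean value theorem applied to $P_i$ on this neighbourhood gives $\abs{P_i(\vec{u}) - P_i(\vec{a})} \ll \psi(D)$, with an implicit constant depending only on $\vec{a}$ and the polynomials $P_i$. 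The triangle inequality then produces $\abs{P_i(\vec{u}) - v_i} \ll \psi(D)$.

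Next I would obtain a matching lower bound on $\abs{P_i(\vec{u}) - v_i}$ under the assumption that it is nonzero. Writing $\vec{u} = (r_1/D, \dots, r_n/D)$, the quantity $D^{d_i} P_i(\vec{u})$ is an integer, so $P_i(\vec{u}) = N_i / D^{d_i}$ for some $N_i \in \setZ$, and likewise $v_i = r_{n+i}/D$. Clearing denominators, $P_i(\vec{u}) - v_i$ is a rational number with denominator dividing $D^{d_i} \leq D^d$, so whenever it is nonzero its absolute value is at least $1/D^d$.

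Combining the two bounds yields $1 \leq D^d \abs{P_i(\vec{u}) - v_i} \ll D^d \psi(D)$, which contradicts the growth hypothesis $D^d \psi(D) \to 0$ once $D$ is large enough. Hence $P_i(\vec{u}) = v_i$ for every $i$, i.e.\ $\vec{r} \in \Gamma$, as required. The only subtlety I expect is ensuring that the Lipschitz constant in step one can be chosen uniformly over all relevant $\vec{u}$; this is harmless because $\norm{\vec{u} - \vec{a}} \leq \psi(D)$ tends to $0$, so one may restrict attention to a fixed compact neighbourhood of $\vec{a}$ on which each $\nabla P_i$ is bounded.
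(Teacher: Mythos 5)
The paper does not give a proof of this lemma: it simply states ``This was already shown in full generality in \cite[Lemma~1]{Budarina:2010dd} and we hence omit the proof.'' Your reconstruction is correct and is the standard argument for such statements. Writing $\vec{r} = (\vec{u}, v_1, \dots, v_m)$ with common denominator $D = H(\vec{r})$, you sandwich the rational number $P_i(\vec{u}) - v_i$ between the upper bound $\ll \psi(D)$ (obtained from the Lipschitz behaviour of $P_i$ on a fixed neighbourhood of $\vec{a}$, together with $\norm{\vec{u}-\vec{a}}\leq\psi(D)$ and $\abs{P_i(\vec{a})-v_i}\leq\psi(D)$) and the lower bound $\geq D^{-d}$ (valid whenever the difference is nonzero, since it is a rational with bounded denominator), so the growth hypothesis $D^d\psi(D)\to 0$ forces the difference to vanish once $D$ is large. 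Your final remark about uniformity of the Lipschitz constant is exactly the right point to flag and is handled correctly.

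One micro-nitpick: you claim that $P_i(\vec{u})-v_i$ has denominator dividing $D^{d_i}$. This fails when $d_i=0$, since then $P_i(\vec{u})$ is an integer but $v_i = r_{n+i}/D$ still has denominator $D$; the correct statement is that the denominator divides $D^{\max(d_i,1)}$. This is harmless because $\max(d_i,1)\leq d$ holds whenever $d\geq 1$, and the degenerate case $d=0$ is already excluded by the morphism hypothesis of Theorem~\ref{theorem:main}, so your final bound $\geq D^{-d}$ and the conclusion stand unaffected.
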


\begin{proof}[Proof of Theorem~\ref{theorem:main}]
  This is an immediate consequence of Lemma~\ref{lemma:divergence},
  Lemma~\ref{lemma:convergence} and
  Lemma~\ref{lemma:intrinsic-is-ambient}.
\end{proof}

\begin{proof}[Proof of Corollary~\ref{corollary}]
  Put $\psi(r) = r^{-\tau}$ and $f(r) = r^s$ where
  $s = (1+n)/(d\tau)$. We should verify that $f$ satisfies the
  conditions of the theorem. We have
  \begin{equation*}
    r^{-n} f(r) = r^{s-n} \to \infty \text{ as $r \to 0$}
  \end{equation*}
  precisely when
  \begin{equation*}
    s - n = \frac{1 + n}{d\tau} - n < 0
  \end{equation*}
  but this is satisfied as $\tau > (1+n)/dn$.

  By the strict inequality, the theorem is satisfied for
  dimension functions $f(r) = r^t$ where $t$ is in some small
  interval around $s$. It follows that
  $\Hm^t(\intsimapp_{\tau}(\Gamma)) = \infty$ when $t \leq s$
  and $\Hm^t(\intsimapp_{\tau}(\Gamma)) = 0$ when $t > s$.
\end{proof}

\section{Endnotes and examples}
\label{sec:endnotes-discussion}

The most restrictive condition in the theorem is the requirement that
the degree $d$ parts of the polynomials do not have a common point of
vanishing away from $0$ over $\ac{\setQ}$. For $n=1$ this is always
satisfied, as the polynomials are of the form $a_k x^k + \dots + a_0$
and $a_kx^k$ only vanishes at $0$.
For $n > 1$ this is only sometimes satisfied; examples include the Veronese surface and
$\Gamma = \seq{(x, y, x^2 + y^2, x^2 - y^2)}$. On the other hand, it
is never satisfied for hypersurfaces when $n > 1$: The zero locus of a
single nonconstant polynomial in $n$ variables over an algebraically closed field has
dimension $n-1$, and hence cannot be a point.

One could ask if the theorem could be generalized to the case where
this condition is not satisfied. The key ingredient is the estimate
$H(F(\vec{r})) \gg H(\vec{r})^d$ which we derive from
Theorem~\ref{theorem:proj-height}. But if $F$ does not extend to a
morphism over $\ac{\setQ}$, this theorem does not hold and explicit counterexamples can
be constructed. It is even possible that the conclusion of
Theorem~\ref{theorem:proj-height} always has counterexamples when the
map is not a morphism, though the author has not been able to prove or
disprove this. When the conclusion of
Theorem~\ref{theorem:proj-height} fails we get additional rational
points of low height on $\Gamma$, and the present argument fails. It
seems unlikely that the main theorem of this paper still holds in this
case.

\section*{Acknowledgements}

I would like to thank my advisor, Simon Kristensen, for suggesting the
problem and for productive discussions.

\end{document}